\documentclass[11pt,leqno]{article}
\usepackage{amsmath, amscd, amsthm, amssymb, graphics, xypic, mathrsfs, setspace, fancyhdr, times, bm, enumitem}
\usepackage[letterpaper,top=1in, bottom=1in, left=1in, right=1in]{geometry}
\usepackage[colorlinks=true,pagebackref=true]{hyperref} 
\hypersetup{backref}




\newcommand{\colim}{\operatorname{colim}}
\newcommand{\Spec}{\operatorname{Spec}}

\newcommand{\isomt}{{\stackrel{{\scriptscriptstyle{\sim}}}{\;\rightarrow\;}}}

\newcommand{\sma}{{\scriptstyle{\wedge}}}

\newcommand{\Singaone}{\operatorname{Sing}^{\aone}\!\!}

\renewcommand{\O}{{\mathcal O}}
\renewcommand{\hom}{\operatorname{Hom}}

\newcommand{\Q}{{\mathbb Q}}
\newcommand{\Z}{{\mathbb Z}}

\newcommand{\aone}{{\mathbb A}^1}

\newcommand{\gm}[1]{{{\mathbf G}_{m}^{#1}}}

\newcommand{\et}{\text{\'et}}
\newcommand{\ho}[1]{\mathscr{H}({#1})}

\newcommand{\Nis}{\operatorname{Nis}}
\newcommand{\Zar}{\operatorname{Zar}} 

\newcommand{\Frac}{\mathrm{Frac}}

\newcommand{\Sm}{\mathrm{Sm}}

\newcommand{\op}[1]{\operatorname{#1}}
\newcommand{\fppf}{\mathrm{fppf}}

\newcommand{\Addresses}{{
  \bigskip
  \footnotesize

  A.~Asok, \textsc{Department of Mathematics, University of Southern California,
    Los Angeles, CA 90089-2532, United States;} \textit{E-mail address:} \url{asok@usc.edu}

  \medskip

  M.~Hoyois, \textsc{Department of Mathematics, University of Southern California,
    Los Angeles, CA 90089-2532, United States;} \textit{E-mail address:} \url{hoyois@usc.edu}

  \medskip

  M.~Wendt, \textit{E-mail address:} \url{m.wendt.c@gmail.com}

}}

\newcounter{intro}
\setcounter{intro}{1}

\theoremstyle{plain}
\newtheorem{thm}{Theorem}[section]

\newtheorem{lem}[thm]{Lemma}

\newtheorem{prop}[thm]{Proposition}
\newtheorem*{claim*}{Claim}  

\newtheorem{question}[thm]{Question}

\newtheorem*{thm*}{Theorem}
\newtheorem*{problem*}{Problem}

\newtheorem{thmintro}{Theorem}

\newtheorem{defnintro}[thmintro]{Definition}

\theoremstyle{definition}

\theoremstyle{remark}
\newtheorem{rem}[thm]{Remark}
\newtheorem{remintro}[thmintro]{Remark}

\newtheorem{ex}[thm]{Example}

\numberwithin{equation}{section}

\begin{document}
\pagestyle{fancy}
\renewcommand{\sectionmark}[1]{\markright{\thesection\ #1}}
\fancyhead{}
\fancyhead[LO,R]{\bfseries\footnotesize\thepage}
\fancyhead[LE]{\bfseries\footnotesize\rightmark}
\fancyhead[RO]{\bfseries\footnotesize\rightmark}
\chead[]{}
\cfoot[]{}
\setlength{\headheight}{1cm}

\author{Aravind Asok\thanks{Aravind Asok was partially supported by National Science Foundation Award DMS-1254892.} \and Marc Hoyois\thanks{Marc Hoyois was partially supported by National Science Foundation Award DMS-1761718.}\and Matthias Wendt}

\title{{\bf Affine representability results in $\aone$-homotopy theory\\ III: finite fields and complements}}
\date{}
\maketitle

\begin{abstract}
We give a streamlined proof of $\aone$-representability for $G$-torsors under ``isotropic" reductive groups, extending previous results in this sequence of papers to finite fields.  We then analyze a collection of group homomorphisms that yield fiber sequences in ${\mathbb A}^1$-homotopy theory, and identify the final examples of motivic spheres that arise as homogeneous spaces for reductive groups.
\end{abstract}


\section{Introduction/Statement of Results}
Suppose $k$ is a field.  We study torsors under algebraic groups considered in the following definition.

\begin{defnintro}
\label{defnintro:isotropic}
If $\op{G}$ is a reductive algebraic $k$-group scheme, we will say that $\op{G}$ is ``isotropic" if each of the almost $k$-simple components of the derived group of $\op{G}$ contains a $k$-subgroup scheme isomorphic to $\gm{}$.
\end{defnintro}

Write $\ho{k}$ for the (unstable) Morel--Voevodsky $\aone$-homotopy category \cite{MV}.  Write $B\op{G}$ for the usual bar construction of $\op{G}$ (which can be thought of as a simplicial presheaf on the category of smooth $k$-schemes).  If $X$ is a smooth $k$-scheme, then write $[X,B\op{G}]_{\aone}$ for the set $\hom_{\ho{k}}(X,B\op{G})$.  The main goal of this paper is to establish the following representability result about Nisnevich locally trivial $\op{G}$-torsors.

\begin{thmintro}
\label{thmintro:isotropictorsors}
Suppose $k$ is a field, and $\op{G}$ is an ``isotropic" reductive $k$-group.  For every smooth affine $k$-scheme $X$, there is a bijection
\[
\op{H}^1_{\Nis}(X,\op{G}) \cong [X,B\op{G}]_{\aone}
\]
that is functorial in $X$.
\end{thmintro}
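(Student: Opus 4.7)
The plan is to invoke the general affine representability criterion established in Part I of this series, which asserts that a Nisnevich sheaf of simplicial groupoids $\mathcal{F}$ on $\Sm_k$ whose underlying presheaf of sets $\pi_0 \mathcal{F}$ is $\aone$-invariant on smooth affine $k$-schemes satisfies $\pi_0 \mathcal{F}(X) \cong [X,\mathcal{F}]_{\aone}$ for every smooth affine $X$. Taking $\mathcal{F} = B_{\Nis} G$ so that $\pi_0 \mathcal{F}(X) = H^1_{\Nis}(X,G)$, the theorem reduces to the affine $\aone$-invariance of $H^1_{\Nis}(-,G)$ --- equivalently, to showing that every Nisnevich-locally trivial $G$-torsor on $X \times \aone$ with $X$ smooth affine is pulled back from $X$.

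Over an infinite field, this affine $\aone$-invariance is the main technical result of Part II. It is proved by combining the Panin--Stavrova--Vavilov extension of the Grothendieck--Serre conjecture to isotropic reductive groups with a patching argument, both of which rely on Gabber's geometric presentation lemma in its standard form, whose hypothesis demands that the residue field be infinite. The new content of the present paper is therefore the finite-field case, to which Gabber's lemma does not directly apply.

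My plan for the finite-field case is a transfer argument. Choose two finite extensions $k_1, k_2/k$ of coprime degrees $d_1, d_2$, both prime to $\op{char}(k)$. After base change to $k_i$ one may pass to any infinite overfield and invoke the infinite-residue-field case of Part II, producing a trivialization of the obstruction class over $k_i$. A transfer compatible with restriction, inducing multiplication by $d_i$ on the relevant pointed set, combined with a Bezout relation in $d_1, d_2$, should then eliminate the obstruction over $k$ itself.

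The main obstacle is that $H^1_{\Nis}(-,G)$ is merely a pointed set and admits no naive abelian transfer. To circumvent this I would work one level up, at the simplicial presheaf $B_{\Nis}G$, where motivic transfers along finite \'etale morphisms are available --- for instance via the Bachmann--Hoyois theory of normed motivic spectra, or via a Becker--Gottlieb-style construction exploiting that an isotropic $G$ carries a cellular decomposition compatible with such transfers. Once these transfers are constructed and shown to be compatible with restriction on $\pi_0$, the Bezout argument becomes rigorous, affine $\aone$-invariance over finite $k$ follows, and the representability statement drops out of Part I.
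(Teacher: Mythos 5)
You correctly identify the reduction: by \cite[Theorem 2.3.5]{AHWII} the problem becomes affine $\aone$-invariance of $\op{H}^1_{\Nis}(-,\op{G})$, and you correctly locate where the infinite-field hypothesis enters in Part II, namely in the Colliot-Th\'el\`ene--Ojanguren/Gabber presentation lemma. However, your proposed fix for the finite-field case has a genuine gap, and it is not the route the paper takes.

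The transfer/Bezout strategy founders on the non-abelian nature of the problem, and you flag the obstacle yourself but the proposed circumvention does not close it. First, $\pi_0(B_{\Nis}\op{G})(X)=\op{H}^1_{\Nis}(X,\op{G})$ is a pointed set with no group structure for non-abelian $\op{G}$, so the identity ``$\mathrm{tr}\circ\mathrm{res}=d_i\cdot\mathrm{id}$'' needed for a Bezout argument has no meaning at the level where you actually need it. Second, the transfer constructions you invoke are stable in nature: Bachmann--Hoyois normed spectra and Becker--Gottlieb transfers live in $\SH(k)$ (the latter via Spanier--Whitehead duality), and there is no known mechanism to descend them to an unstable transfer on the simplicial presheaf $B_{\Nis}\op{G}$ that is compatible with the classification of $\op{G}$-torsors on $\pi_0$. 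An isotropic $\op{G}$ carrying a cellular structure does not by itself produce such an unstable transfer. Third, there is a smaller confusion: after base change from finite $k$ to a finite extension $k_i$, the field is still finite, and passing to an infinite overfield and descending back to $k_i$ requires a separate (and nontrivial) limit argument over finite subextensions that you do not address.

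The paper instead removes the infinite-field hypothesis at its source: it cites a recent theorem of Hogadi--Kulkarni \cite{HK}, proving the Gabber presentation lemma (stated by Morel in \cite[Lemma 1.15]{MField}) over \emph{finite} fields as well. With this strengthened presentation lemma in hand (Theorem~\ref{thm:gabberfinite}), the formalism of Proposition~\ref{prop:formalism}, slightly streamlined from \cite[Proposition 3.3.4]{AHWII}, applies uniformly over any field, and the argument of Part II for affine $\aone$-invariance of $\op{H}^1_{\Nis}(-,\op{G})$ (Theorem~\ref{thm:affinehomotopyinvarianceforGtorsors}) goes through verbatim. This sidesteps the transfer difficulties entirely. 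Your outline would need a fundamentally new idea to make the transfer step rigorous; the paper's solution is to import the Hogadi--Kulkarni presentation lemma rather than to transfer from extensions.
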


In \cite[Theorem 4.1.3]{AHWII}, Theorem~\ref{thmintro:isotropictorsors} was proved under the more restrictive assumption that $k$ is infinite.  By \cite[Theorem 2.3.5]{AHWII}, in order to establish Theorem~\ref{thmintro:isotropictorsors}, it suffices to prove that the functor $X \mapsto \op{H}^1_{\Nis}(X,\op{G})$ is $\aone$-invariant on smooth affine schemes, i.e., if for every smooth affine $k$-scheme $X$, the pullback along the projection $X \times \aone \to X$ induces a bijection $\op{H}^1_{\Nis}(X,\op{G}) \isomt \op{H}^1_{\Nis}(X \times \aone,\op{G})$.

Using a recent refinement of the Gabber presentation lemma over finite fields first stated by F. Morel \cite[Lemma 1.15]{MField} (where it is attributed to Gabber) and proven by A. Hogadi and G. Kulkarni \cite{HK}, we establish affine homotopy invariance over finite fields in Theorem~\ref{thm:affinehomotopyinvarianceforGtorsors}.

\begin{remintro}
Over a finite field, one knows that all reductive $k$-group schemes are quasi-split by a result of Lang, cf. \cite{Lang}.  In particular, semi-simple group schemes will automatically be isotropic in this case.
\end{remintro}

As immediate consequences, we may remove the assumption that $k$ is infinite in many of the results stated in \cite{AHWII}.  In particular, we establish the following result.

\begin{thmintro}
\label{thmintro:homogeneousspaces}
Assume $k$ is a field.  If $\op{H} \to \op{G}$ is a closed immersion of ``isotropic" reductive $k$-group schemes, and the $\op{H}$-torsor $\op{G} \to \op{G}/\op{H}$ is Nisnevich locally split, then for any smooth affine $k$-scheme $X$, there is a bijection
\[
\pi_0(\Singaone \op{G}/\op{H})(X) \cong [X,\op{G}/\op{H}]_{\aone}.
\]
\end{thmintro}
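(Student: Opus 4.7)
The plan is to rerun the proof of the analogous homogeneous space result from \cite{AHWII}, which was established there only under the assumption that $k$ is infinite. That hypothesis entered solely through the affine representability of torsors under ``isotropic'' reductive groups; since Theorem~\ref{thmintro:isotropictorsors} now removes it, the same argument should apply verbatim to any field.

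Concretely, the hypothesis that the $\op{H}$-torsor $\op{G} \to \op{G}/\op{H}$ is Nisnevich-locally split implies that the bar construction produces an $\aone$-fiber sequence $\op{G}/\op{H} \to B\op{H} \to B\op{G}$ in $\ho{k}$: the $\aone$-homotopy fiber of $B\op{H} \to B\op{G}$ over the base-point classifies $\op{H}$-reductions of the trivial $\op{G}$-torsor, and is thus $\op{G}/\op{H}$. For a smooth affine $k$-scheme $X$, Theorem~\ref{thmintro:isotropictorsors} identifies $[X,B\op{H}]_{\aone}$ and $[X,B\op{G}]_{\aone}$ with $\op{H}^1_{\Nis}(X,\op{H})$ and $\op{H}^1_{\Nis}(X,\op{G})$, functorially in $X$. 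The associated exact sequence of pointed sets then realizes $[X,\op{G}/\op{H}]_{\aone}$ as the set of isomorphism classes of pairs $(P,\tau)$, where $P$ is a Nisnevich $\op{H}$-torsor on $X$ and $\tau$ is a trivialization of the induced $\op{G}$-torsor $P \times^{\op{H}} \op{G}$.

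The natural assignment $f \mapsto (f^*\op{G}, \tau_f)$, obtained by pulling back the universal $\op{H}$-torsor $\op{G}\to \op{G}/\op{H}$ together with its tautological trivialization, defines a map from $\pi_0(\Singaone \op{G}/\op{H})(X)$ to the same set of pairs: it is well-defined on naive $\aone$-homotopy classes because $\op{H}^1_{\Nis}(-,\op{H})$ is $\aone$-invariant on smooth affine schemes (Theorem~\ref{thmintro:isotropictorsors}). Surjectivity is immediate, as a trivialization $\tau$ is an $\op{H}$-equivariant morphism $P \to \op{G}$, which descends to a morphism $X \to \op{G}/\op{H}$. The main obstacle is injectivity — showing that two morphisms $f_0,f_1\colon X\to \op{G}/\op{H}$ yielding isomorphic pairs are connected by a naive $\aone$-homotopy — which, as in \cite{AHWII}, is handled by spreading the isomorphism data over $X\times\aone$ and applying $\aone$-invariance of $\op{H}^1_{\Nis}(-,\op{H})$ on $X \times \aone$ to produce the homotopy. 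Since every step uses only inputs now available over any field, the theorem follows.
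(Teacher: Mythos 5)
Your proposal is correct and takes essentially the same approach as the paper: prove affine homotopy invariance of $\op{H}^1_{\Nis}(-,\op{G})$ over arbitrary fields (Theorem~\ref{thm:affinehomotopyinvarianceforGtorsors}), then apply the general representability machinery of \cite[Theorem 2.4.2]{AHWII}. Your more detailed sketch of the $\aone$-fiber sequence $\op{G}/\op{H}\to B\op{H}\to B\op{G}$ and the classification of $[X,\op{G}/\op{H}]_{\aone}$ by pairs $(P,\tau)$ simply unfolds the internal proof of the cited AHWII result, which the paper invokes as a black box.
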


Theorem~\ref{thm:generalizedflagsareaonenaive} contains a similar result for certain generalized flag varieties under ``isotropic" reductive $k$-group schemes and the remainder of the main results (e.g., Theorem~\ref{thm:evenspheres}) contain some useful explicit examples.

\subsubsection*{Acknowledgements}
We would like to thank A. Hogadi and G. Kulkarni for sharing an early draft of \cite{HK}.

\subsubsection*{Notation/Preliminaries}
Throughout the paper, $k$ will be a field.  Following \cite{AHW,AHWII}, we use the following terminology:
\begin{itemize}[noitemsep,topsep=1pt]
\item $\Sm_k$ is the category of smooth $k$-schemes;
\item $\mathrm{sPre}(\Sm_k)$ is the category of simplicial presheaves on $\Sm_k$; objects of this category will typically be denoted by script letters $\mathscr{X},\mathscr{Y}$, etc.;
\item if $t$ is a topology on $\Sm_k$, we write $R_{t}$ for the fibrant replacement functor for the injective $t$-local model structure on $\mathrm{sPre}(\Sm_k)$ (see \cite[Section 3.1]{AHW});
\item $\Singaone{}$ is the singular construction (see \cite[Section 4.1]{AHW});
\item $\ho{k}$ is the Morel--Voevodsky unstable $\aone$-homotopy category (see \cite[Section 5]{AHW});
\item if $\mathscr{X}$ and $\mathscr{Y}$ are simplicial presheaves on $\Sm_k$, we write $[\mathscr{X},\mathscr{Y}]_{\aone} := \hom_{\ho{k}}(\mathscr{X},\mathscr{Y})$.
\end{itemize}
Throughout the text, we will speak of reductive group schemes; following SGA3 \cite{SGA3-3}, by convention such group schemes have geometrically connected fibers.

\section{Proofs}
\subsection{Homotopy invariance revisited}
In \cite[Proposition 3.3.4]{AHWII}, we developed a formalism for establishing affine homotopy invariance of certain functors; this method was basically an extension of a formalism developed by Colliot-Th\'el\`ene and Ojanguren \cite[Th{\'e}or{\`e}me 1.1]{CTO} and relied on a refined Noether normalization result (a ``presentation lemma") that held over infinite fields \cite[Lemma 1.2]{CTO}.  In Theorem \ref{thm:gabberfinite}, we recall a version of a stronger ``presentation lemma" due initially to Gabber.  Then, in Proposition~\ref{prop:formalism}, we simplify and generalize \cite[Proposition 3.3.4]{AHWII}.

\subsubsection*{Gabber's lemma}
The following result was initially stated in \cite[Lemma 1.15]{MField} where it was attributed to private communication with Gabber.  In the case $k$ is infinite, a detailed proof of a more general result is given in \cite[Theorem 3.1.1]{CTHK}, while when $k$ is finite the result is established recently by Hogadi and Kulkarni \cite[Theorem 1.1]{HK}.  In fact, in what follows we will not need the full strength of this result.

\begin{thm}
\label{thm:gabberfinite}
Suppose $F$ is a field, and suppose $X$ is a smooth affine $F$-variety of dimension $d \geq 1$.  Let $Z \subset X$ be a principal divisor defined by an element $f \in \O_X(X)$ and $p \in Z$ a closed point.  There exist i) a Zariski open neighborhood $U$ of $p$ in $X$, ii) a morphism $\Phi: U \to {\mathbb A}^d_F$, iii) an open neighborhood $V \subset {\mathbb A}^{d-1}_F$ of the composite $\Psi$
\[
U \stackrel{\Phi}{\longrightarrow} {\mathbb A}^d_F \stackrel{\pi}{\longrightarrow} {\mathbb A}^{d-1}_F
\]
(where $\pi$ is the projection onto the first $d-1$ coordinates) such that
\begin{enumerate}[noitemsep,topsep=1pt]
\item the morphism $\Phi$ is \'etale;
\item setting $Z_V := Z \cap \Psi^{-1}V$, the morphism $\Psi|_{Z_V}: Z_V \to V$ is finite;
\item the morphism $\Phi|_{Z_V}: Z_V \to \aone_V = \pi^{-1}(V)$ is a closed immersion;
\item there is an equality $Z_V = \Phi^{-1}\Phi(Z_V)$.
\end{enumerate}
In particular, the morphisms $\Phi$ and $j: \aone_V \setminus Z_V \to \aone_V$ yield a Nisnevich distinguished square of the form
\[
\xymatrix{
U \setminus Z_V \ar[r]\ar[d] & U \ar[d] \\
\aone_V \setminus \Phi(Z_V)  \ar[r] & \aone_V.
}
\]
\end{thm}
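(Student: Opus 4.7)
The plan is to proceed in two stages. First, the existence of $U$, $\Phi$, and $V$ satisfying properties~(1)--(4) is the heart of the classical Gabber presentation lemma, and I would not attempt to reprove it but rather invoke the references already cited above the statement: over an infinite field this is contained in the refined presentation lemma of Colliot-Th\'el\`ene--Hoobler--Kahn \cite[Theorem 3.1.1]{CTHK}, and over a finite field it is the recent result of Hogadi--Kulkarni \cite[Theorem 1.1]{HK}. The real obstacle of the theorem genuinely lives inside those references: one must produce an \'etale projection to $\A^d_F$ whose restriction to $Z$ is simultaneously finite onto its image, a delicate constraint when the residue field of $p$ is finite since only finitely many linear projections are available. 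This is precisely why the finite-field case required the new geometric input of~\cite{HK}.

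Granting~(1)--(4), I would then deduce the Nisnevich distinguished square by a short formal verification. After replacing $U$ by the open subscheme $\Psi^{-1}(V)$, the morphism $\Phi$ factors through $\aone_V = \pi^{-1}(V) \subset \A^d_F$, and the resulting map $\Phi\colon U \to \aone_V$ is \'etale by~(1). The subset $\Phi(Z_V) \subset \aone_V$ is closed simply because, by~(3), $\Phi|_{Z_V}\colon Z_V \to \aone_V$ is a closed immersion; hence both horizontal arrows of the proposed square are open immersions of smooth schemes and the right vertical arrow is \'etale.

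Two checks remain. The square is Cartesian: using~(4) we have $\Phi^{-1}(\Phi(Z_V)) = Z_V$, whence
\[
U \times_{\aone_V}(\aone_V \setminus \Phi(Z_V)) = U \setminus \Phi^{-1}(\Phi(Z_V)) = U \setminus Z_V.
\]
And the \'etale map $\Phi$ restricts to an isomorphism on the closed complements of the horizontal open immersions: condition~(3) tells us $\Phi|_{Z_V}\colon Z_V \to \Phi(Z_V)$ is a closed immersion, and it is by construction surjective onto its image, hence an isomorphism. These are precisely the defining conditions of a Nisnevich distinguished square, which completes the deduction.
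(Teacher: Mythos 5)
Your approach matches the paper's: Theorem~\ref{thm:gabberfinite} is recalled from the literature rather than proved in the paper, which cites exactly the same sources you do — \cite[Theorem 3.1.1]{CTHK} for infinite base fields and \cite[Theorem 1.1]{HK} for finite fields — and you correctly identify that the genuine mathematical content (producing the \'etale projection with the required finiteness and closedness on $Z$ when the residue field of $p$ is finite) lives inside those references. You go a bit further than the paper by actually carrying out the deduction of the ``In particular'' Nisnevich distinguished square from (1)--(4), which the paper leaves implicit; that verification is correct: after shrinking $U$ to $\Psi^{-1}(V)$ so that $\Phi$ lands in $\aone_V$, the map $\Phi$ is \'etale by (1), $\Phi(Z_V)$ is closed and $Z_V \to \Phi(Z_V)$ is an isomorphism by (3), the square is Cartesian by (4), and the induced map on the (reduced) closed complements is therefore an isomorphism — precisely the definition of an elementary Nisnevich square.
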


\subsubsection*{A formalism for homotopy invariance}
The following result simplifies and generalizes \cite[Proposition 3.3.4]{AHWII}.

\begin{prop}
\label{prop:formalism}
Suppose $k$ is a field. Let $\mathbf{F}$ be a presheaf of pointed sets on the category $\mathbf C$ of essentially smooth affine $k$-schemes with the following properties:
\begin{enumerate}[noitemsep,topsep=1pt]
\item If $\Spec A\in \mathbf C$ and $S\subset A$ is a multiplicative subset, the canonical map $\colim_{f\in S}\mathbf{F}(A_f) \to \mathbf{F}(S^{-1}A)$ has trivial kernel.
\item For every finitely generated separable field extension $L/k$ and every integer $n \geq 0$, the restriction map
\[
\mathbf{F}(L[t_1,\ldots,t_n]) \longrightarrow \mathbf{F}(L(t_1,\ldots,t_n))
\]
has trivial kernel.
\item For every Nisnevich square
\[
	\xymatrix{
	W \ar@{^{(}->}[r]\ar[d] & V \ar[d]\\
	U \ar@{^{(}->}[r] & X,
	}
\]
in $\mathbf C$ where $W\subset V$ is the complement of a principal divisor, the map
    \[
    \ker(\mathbf{F}(X) \to \mathbf{F}(U)) \longrightarrow \ker(\mathbf{F}(V) \to \mathbf{F}(W))
    \]
    is surjective.
\end{enumerate}
If $\Spec B\in\mathbf{C}$ is local, then, for any integer $n \geq 0$, the restriction map
\[
\mathbf{F}(B[t_1,\ldots,t_n]) \longrightarrow \mathbf{F}(\Frac(B)(t_1,\ldots,t_n))
\]
has trivial kernel.
\end{prop}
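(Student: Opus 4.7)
The plan is to adapt the Colliot-Th\'el\`ene--Ojanguren strategy underlying \cite[Proposition 3.3.4]{AHWII}, substituting Theorem~\ref{thm:gabberfinite} for the older presentation lemma. Set $K = \Frac(B)$; since $B$ is essentially smooth local, $K/k$ is a finitely generated separable extension and $B$ is a domain (being regular). I will argue by induction on $d = \dim B$ with $n \geq 0$ arbitrary; the base case $d = 0$ means $B = K$ is a field, so the conclusion is exactly condition~(2).

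For the inductive step, given $\alpha \in \mathbf{F}(B[t_1,\ldots,t_n])$ in the kernel, factor the restriction as
\[
\mathbf{F}(B[t_1,\ldots,t_n]) \longrightarrow \mathbf{F}(K[t_1,\ldots,t_n]) \longrightarrow \mathbf{F}(K(t_1,\ldots,t_n)).
\]
Condition~(2) with $L=K$ gives triviality of the kernel of the second arrow, so the image of $\alpha$ in $\mathbf{F}(K[t_1,\ldots,t_n])$ is already trivial. Since $K[t_1,\ldots,t_n]$ is the localization of $B[t_1,\ldots,t_n]$ at $S = B \setminus \{0\}$, condition~(1) produces $f \in S$ with $\alpha|_{B_f[t_1,\ldots,t_n]} = *$. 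If $f$ were a unit in $B$ we would be done, so we may assume $f \in \mathfrak{m}_B$.

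The geometric heart of the proof is to leverage Theorem~\ref{thm:gabberfinite}. Choose a smooth affine $k$-scheme $Y$ of dimension $d$ and a closed point $y$ such that $B = \O_{Y,y}$, and lift $f$ to $\tilde f \in \O_Y(Y)$ after possibly shrinking $Y$. Applying Theorem~\ref{thm:gabberfinite} to $(Y, V(\tilde f), y)$ yields an \'etale morphism $\Phi \colon U \to \aone_V$ (with $V \subset {\mathbb A}^{d-1}_k$ open) as the top map of a Nisnevich distinguished square of smooth affine $k$-schemes. Base-changing this square by ${\mathbb A}^n$ preserves the Nisnevich distinguished property, and localizing the right-hand column at the image of $(y,0)$ under $\Phi \times \mathrm{id}$ produces a Nisnevich distinguished square in $\mathbf{C}$ whose top row involves $\Spec B[t_1,\ldots,t_n]$ and whose bottom row involves $\Spec B''[t_1,\ldots,t_n]$, where $B'' = \O_{\aone_V, \Phi(y)}$ has the form $B_0[u]_{(u,\mathfrak{m}_{B_0})}$ with $B_0 = \O_{V,\Psi(y)}$ essentially smooth local of dimension $d-1$, and the divisor defining the triviality becomes $V(h)$ for some $h \in B_0[u]$ monic in $u$.

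Condition~(3) then reduces the task to showing that the kernel of $\mathbf{F}(B''[t_1,\ldots,t_n]) \to \mathbf{F}(B''[1/h][t_1,\ldots,t_n])$ is trivial; surjectivity on kernels propagates this triviality back to the top-right vertex $\Spec B[t_1,\ldots,t_n]$, killing $\alpha$. Since $\Frac(B''[t_1,\ldots,t_n]) = \Frac(B_0)(u, t_1,\ldots,t_n)$ and $h$ is monic in $u$, the inductive hypothesis applied to $B_0$ with $n+1$ variables (absorbing the $u$-direction) supplies precisely this triviality, after one more application of conditions~(1) and~(2) to transfer between the localized ring $B''[t_1,\ldots,t_n]$ and the polynomial ring $B_0[u, t_1,\ldots,t_n]$. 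The \emph{main obstacle} is the construction in the previous paragraph: one must pick the closed point $y$, perform the base change, and localize carefully enough that the resulting square truly lies in $\mathbf{C}$ with the advertised shape, and one must also deal with the corner case where the residue field of $B$ prevents modeling $B$ as $\O_{Y,y}$ with $y$ a closed point of a $d$-dimensional smooth affine $Y$, which requires a small modification of the smooth model.
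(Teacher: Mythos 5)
Your proposal follows the same Colliot-Th\'el\`ene--Ojanguren-style induction as the paper, and the opening steps (reducing via conditions (1) and (2) to triviality over $B_f$ for some $f \in \mathfrak m_B$, then invoking Theorem~\ref{thm:gabberfinite}) agree with the paper's argument. The problem is in how you set up the Nisnevich square and invoke the inductive hypothesis.

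You localize the bottom-right vertex all the way to $B'' = \O_{\aone_V,\Phi(y)}$, a local ring of \emph{dimension $d$}. After applying condition (3) you then hold an element $\alpha' \in \ker\bigl(\mathbf F(B''[t_1,\ldots,t_n]) \to \mathbf F(B''[1/h][t_1,\ldots,t_n])\bigr)$, and you propose to transfer to $B_0[u,t_1,\ldots,t_n]$ by ``one more application of conditions (1) and (2).'' This transfer does not work: condition (1) only controls the kernel of $\colim_f \mathbf F(A_f) \to \mathbf F(S^{-1}A)$, it does not let you pull an element of $\mathbf F(B''[t_1,\ldots,t_n])$ back to $\mathbf F(B_0[u]_f[t_1,\ldots,t_n])$ for some $f$ (that would require $\mathbf F$ to preserve the filtered colimit, which is not among the hypotheses). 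So you have no way to land in the polynomial ring over $B_0$ where the inductive hypothesis lives, and applying the inductive hypothesis to $B''$ directly is circular since $\dim B'' = d$. The paper avoids this by \emph{not} localizing in the $\aone$-direction at the bottom: its Nisnevich square has $\Spec A[x]$ (with $A = B_0$ local of dimension $d-1$) as the bottom-right vertex, so the lift $\xi'$ produced by condition (3) already lives in $\mathbf F(A[x,t_1,\ldots,t_n])$, where the inductive hypothesis applies with $n+1$ variables. Its triviality in $\mathbf F(\Frac(A)(x,t_1,\ldots,t_n))$ follows because $U \subset \Spec A[x]$ is dense open, so the generic point factors through it. Two smaller points: the monic divisor $h$ you invoke corresponds to assertion (2) of Theorem~\ref{thm:gabberfinite}, which the paper's remark explicitly notes is \emph{not} used in this argument; and ``surjectivity on kernels propagates triviality back'' is the wrong mechanism -- surjectivity produces the lift $\alpha'$, and it is the fact that $\alpha$ is the \emph{restriction} of $\alpha'$ that lets triviality of $\alpha'$ kill $\alpha$.
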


\begin{proof}
We proceed by induction on the dimension $d$ of $B$.  The case $d = 0$ is immediate from (2).  Assume we know the result in dimension $\leq d-1$. Suppose $\xi \in \ker(\mathbf{F}(B[t_1,\ldots,t_n]) \to \mathbf{F}(\Frac(B)(t_1,\ldots,t_n)))$. By (2), the image of $\xi$ in $\mathbf{F}(\Frac(B)[t_1,\ldots,t_n])$ is trivial. By (1), we conclude that there is an element $g \in B \setminus 0$ such that $\xi$ restricts to the trivial element in $\mathbf{F}(B_g[t_1,\ldots,t_n])$.

By Theorem~\ref{thm:gabberfinite} applied to $X=\Spec B$, $Z$ the principal divisor defined by $g$, and $p$ the closed point in $\Spec B$, we may find a Nisnevich square
\[
	\xymatrix{
	\Spec B_g \ar@{^{(}->}[r]\ar[d] & \Spec B \ar[d]\\
	U \ar@{^{(}->}[r] & \Spec A[x]
	}
\]
with $A$ an essentially smooth local ring of dimension $d-1$. Note that the open immersion $U\subset \Spec A[x]$ is affine, since it is so after the surjective \'etale base change $U\amalg \Spec B \to \Spec A[x]$. Now, by (3), since $\xi$ lies in the kernel of $\mathbf{F}(B[t_1,\ldots,t_n]) \to \mathbf{F}(B_g[t_1,\ldots,t_n])$, we may find
\[
\xi' \in \ker(\mathbf{F}(A[x][t_1,\ldots,t_n]) \to \mathbf{F}(U[t_1,\ldots,t_n]))
\]
lifting $\xi$. In particular, the image of the class $\xi'$ in $\mathbf{F}(\Frac(A)(x,t_1,\ldots,t_n))$ must also be trivial. However, $A[x][t_1,\ldots,t_n] = A[x,t_1,\ldots,t_n]$ and since $A$ has dimension $d-1$, we conclude that $\xi'$ is trivial, which means that $\xi$ must also be trivial and we are done.
\end{proof}

\begin{rem}
The proof of Proposition~\ref{prop:formalism} only uses assertions 1, 3, and 4 of Theorem~\ref{thm:gabberfinite}, and it may be possible to give a shorter and more self-contained proof of these assertions.
\end{rem}

\subsubsection*{Homotopy invariance for $G$-torsors over arbitrary fields}
We now apply Proposition~\ref{prop:formalism} in the case of the functor ``isomorphism classes of Nisnevich locally trivial $\op{G}$-torsors" under an ``isotropic" reductive $k$-group $\op{G}$ (see Definition~\ref{defnintro:isotropic}).

\begin{thm}
\label{thm:affinehomotopyinvarianceforGtorsors}
If $k$ is a field, and $\op{G}$ is an ``isotropic" reductive $k$-group scheme, then, for any smooth $k$-algebra $A$ and any integer $n \geq 0$, the map
\[
\op{H}^1_{\Nis}(\Spec A,\op{G}) \longrightarrow \op{H}^1_{\Nis}(\Spec A[t_1,\ldots,t_n],\op{G})
\]
is a pointed bijection.
\end{thm}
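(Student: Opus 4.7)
The plan is to apply Proposition~\ref{prop:formalism} to the pointed presheaf $\mathbf{F}(\Spec A) := \op{H}^1_{\Nis}(\Spec A, \op{G})$ on essentially smooth affine $k$-schemes, pointed by the trivial torsor.  Since $\Frac(B)(t_1,\ldots,t_n)$ is a field and the Nisnevich site of any field is trivial, the codomain $\mathbf{F}(\Frac(B)(t_1,\ldots,t_n))$ is a singleton, so the conclusion of the proposition will take the strong form $\op{H}^1_{\Nis}(\Spec B[t_1,\ldots,t_n], \op{G}) = *$ for every essentially smooth local $k$-algebra $B$ and every $n \geq 0$.

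Conditions (1) and (3) of the proposition for $\mathbf{F}$ are verified by standard descent arguments.  Condition (1), continuity under filtered localization, holds because $\op{H}^1_{\Nis}(-, \op{G})$ commutes with cofiltered limits of affine schemes along flat transition maps, a consequence of the finite presentation of the smooth group scheme $\op{G}$.  For condition (3), given a Nisnevich square with $W \subset V$ the complement of a principal divisor, a $\op{G}$-torsor on $V$ trivial on $W$ can be patched with the trivial torsor on $U$ via Nisnevich descent for $B\op{G}$, producing a $\op{G}$-torsor on $X$ whose restriction to $U$ is trivial.

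Condition (2) is the main obstacle, and is where the isotropy hypothesis on $\op{G}$ enters essentially.  It amounts to the assertion that $\op{H}^1_{\Nis}(\mathbb A^n_L, \op{G}) = *$ for every finitely generated separable field extension $L/k$ and every $n \geq 0$.  I would combine two inputs: first, a Nisnevich-locally trivial $\op{G}$-torsor on a regular scheme is automatically rationally trivial, since any Nisnevich cover admits a residue-field-preserving trivialization at each generic point; second, Raghunathan's theorem that a rationally trivial torsor under an isotropic reductive group on $\mathbb A^n_L$ is trivial.  This combination is independent of the cardinality of the base field and is essentially the argument used in the infinite case in \cite{AHWII}.

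Once (1)--(3) are verified, Proposition~\ref{prop:formalism} delivers the local vanishing $\op{H}^1_{\Nis}(B[t_1,\ldots,t_n], \op{G}) = *$.  To deduce the affine $\aone$-invariance stated in the theorem for a smooth $k$-algebra $A$: the zero section $\Spec A \to \Spec A \times \mathbb A^n$ provides a retraction, so the pullback map of the theorem is split injective.  For surjectivity, a $\op{G}$-torsor $Q$ on $A[t_1,\ldots,t_n]$ becomes trivial on $\mathcal{O}_{A,x}^h[t_1,\ldots,t_n]$ for every closed point $x \in \Spec A$ by the local vanishing, hence trivializes on $U[t_1,\ldots,t_n]$ for some Nisnevich cover $U \to \Spec A$; patching these local trivializations with the canonical identification between $Q$ and the pullback of its restriction to the zero section $\{t_1=\cdots=t_n=0\}$ produces the required global isomorphism of $Q$ with the pullback of a $\op{G}$-torsor on $\Spec A$.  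This local-to-global passage mirrors the template of the infinite-field argument in \cite{AHWII}.
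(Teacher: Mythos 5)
Your proposal follows the paper's proof essentially line for line: apply Proposition~\ref{prop:formalism} to $\mathbf{F} = \op{H}^1_{\Nis}(-,\op{G})$, verify conditions (1) via finite presentation of $\op{G}$, (3) via Nisnevich excision for $B\mathbf{Tors}_{\Nis}(\op{G})$, and (2) via Raghunathan's theorem for isotropic groups, then use a local-to-global principle to descend from the local vanishing to the statement for arbitrary smooth affine $A$. The one point where your write-up is slightly looser than the paper is the final local-to-global step: the paper obtains triviality of $\mathscr P_{\mathfrak m}$ over the Zariski localization $A_{\mathfrak m}[t_1,\dots,t_n]$ (to which Proposition~\ref{prop:formalism} applies directly, whereas the henselization $\mathcal O_{A,x}^h$ is not obviously in the category $\mathbf C$ of essentially smooth affines) and then invokes \cite[Corollary 3.2.6]{AHWII} to pass from Zariski-local triviality to global extendedness; your sketch of ``patching the local trivializations with the canonical identification at the zero section'' gestures at the same content but elides the descent bookkeeping that Corollary 3.2.6 packages up. Since you explicitly defer to the infinite-field template of \cite{AHWII} for this step, there is no genuine gap, just a cosmetic difference in how the appeal to the local-to-global principle is phrased.
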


\begin{proof}
Repeat the proof of \cite[Theorem 3.3.7]{AHWII}, replacing appeals to \cite[Proposition 3.3.4]{AHWII} with reference to Proposition~\ref{prop:formalism}.  As the formulation of Proposition~\ref{prop:formalism} differs slightly from that of \cite[Proposition 3.3.4]{AHWII}, we include the argument here.

We want to show that every Nisnevich locally trivial $\op{G}$-torsor $\mathscr{P}$ over the ring $A[t_1,\ldots,t_n]$ is extended from $A$.  After \cite[Corollary 3.2.6]{AHWII}, which is a local-to-global principle for torsors under a reductive group scheme, it suffices to show that for every maximal ideal ${\mathfrak m}$ of $A$, the $G$-torsor $\mathscr{P}_{{\mathfrak m}}$ over $A_{{\mathfrak m}}[t_1,\ldots,t_n]$ is extended from $A_{{\mathfrak m}}$.  In fact, we will show that $\mathscr{P}_{{\mathfrak m}}$ is a trivial torsor.

We claim that the functor from $k$-algebras to pointed sets given by $A \mapsto \op{H}^1_{\Nis}(\Spec A,\op{G})$ satisfies the axioms of Proposition~\ref{prop:formalism}.  The first point is an immediate consequence of the fact that $\op{G}$ has finite presentation by appeal to \cite[Lemma 2.3.3]{AHWII}.  Recall from \cite[Definition 2.3.1]{AHWII} that we write $B\mathbf{Tors}_{\Nis}(\op{G})$ for the simplicial presheaf whose value on a smooth scheme $U$ is the nerve of the groupoid of $\op{G}$-torsors over $U$.  The third point is then a formal consequence of the fact that the functor $\op{H}^1_{\Nis}(-,\op{G})$ can be identified with the set of connected components $\pi_0(B\mathbf{Tors}_{\Nis}(\op{G}))$ since $B\mathbf{Tors}_{\Nis}(\op{G})$ satisfies Nisnevich excision essentially by definition (see \cite[\S 2.3]{AHWII} for more details).  Finally, the second point follows by appeal to results of Raghunathan \cite{Raghunathan1,Raghunathan2}, which are conveniently summarized in \cite[Proposition 2.4 and Th\'eor\`eme 2.5]{CTO}; this is where the assumption that $\op{G}$ is ``isotropic'' is used.

The hypotheses of Proposition~\ref{prop:formalism} having been satisfied, to conclude that $\mathscr{P}_m$ is trivial, it suffices to show that it becomes trivial over the field $\operatorname{Frac}(A_{\mathfrak m})(t_1,\ldots,t_n)$, but this follows immediately from the fact that a field has no nontrivial Nisnevich covering sieves.
\end{proof}

\subsubsection*{Representability results}
Granted Theorem~\ref{thm:affinehomotopyinvarianceforGtorsors}, we can immediately generalize a number of results from \cite{AHWII}.  For ease of reference, we restate the relevant results here.  We begin by establishing Theorem~\ref{thmintro:isotropictorsors} from the introduction.

If $\mathscr{F}$ is a simplicial presheaf on $\Sm_k$, and $\tilde{\mathscr{F}}$ is a Nisnevich-local and $\aone$-invariant fibrant replacement of $\mathscr{F}$, then there is a canonical map $\Singaone \mathscr{F} \to \tilde{\mathscr{F}}$ that is well-defined up to simplicial homotopy.  Recall from \cite[Definition 2.1.1]{AHWII} that a simplicial presheaf $\mathscr{F}$ on $\Sm_k$ is called {\em $\aone$-naive} if for every affine $X \in \Sm_k$ the map $\Singaone \mathscr{F}(X) \to \tilde{\mathscr{F}}(X)$ is a weak equivalence of simplicial sets.  As observed in \cite[Remark 2.1.2]{AHWII}, if $\mathscr{F}$ is $\aone$-naive, then for every affine $X \in \Sm_k$ the map
\[
\pi_0(\Singaone \mathscr{F}(X)) \longrightarrow [X,\mathscr{F}]_{\aone}
\]
is a bijection.

By \cite[Proposition 2.1.3]{AHWII}, $\mathscr{F}$ is $\aone$-naive if and only if $\Singaone \mathscr{F}$ satisfies affine Nisnevich excision in the sense of \cite[Section 2.1]{AHW}.  In that case, $R_{\Zar} \Singaone \mathscr{F}$ is Nisnevich-local and $\aone$-invariant.

\begin{thm}
\label{thm:representabilityofNisG}
If $\op{G}$ is an ``isotropic" reductive $k$-group scheme, then $B_{\Nis}\op{G}$ is $\aone$-naive.  In particular, the canonical map
\[
\op{H}^1_{\Nis}(X,\op{G}) \longrightarrow [X,B\op{G}]_{\aone}
\]
is a bijection for every affine $X\in \Sm_k$.
\end{thm}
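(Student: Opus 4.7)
The plan is to deduce this theorem directly from Theorem~\ref{thm:affinehomotopyinvarianceforGtorsors} by invoking the affine representability criterion \cite[Theorem 2.3.5]{AHWII}. As explicitly recalled in the introduction, that criterion reduces the desired conclusions---namely $\aone$-naivety of $B_{\Nis}\op{G}$ and the representability bijection $\op{H}^1_{\Nis}(X,\op{G}) \cong [X,B\op{G}]_{\aone}$ for affine $X$---to the single hypothesis that $X \mapsto \op{H}^1_{\Nis}(X,\op{G})$ is $\aone$-invariant on smooth affine $k$-schemes. The ``isotropic'' assumption on $\op{G}$ is used only to ensure this one input.

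The first step is to verify this input hypothesis. Under the ``isotropic'' hypothesis on $\op{G}$, it is precisely the content of Theorem~\ref{thm:affinehomotopyinvarianceforGtorsors}, which in turn combines the finite-field refinement of Gabber's presentation lemma (Theorem~\ref{thm:gabberfinite}) with the homotopy-invariance formalism of Proposition~\ref{prop:formalism} and the Raghunathan-type results for isotropic reductive groups cited in its proof. The other standing ingredients fed into \cite[Theorem 2.3.5]{AHWII}---Nisnevich descent for the classifying simplicial presheaf $B\mathbf{Tors}_{\Nis}(\op{G})$ and the identification of its $\pi_0$ on $\Spec A$ with $\op{H}^1_{\Nis}(\Spec A,\op{G})$---are formal consequences of the definitions recalled in \cite[\S 2.3]{AHWII}.

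The second step is to unwind the definitions in order to extract the stated bijection from the $\aone$-naivety of $B_{\Nis}\op{G}$. For affine $X$, one has the chain
\[
\op{H}^1_{\Nis}(X,\op{G}) \;=\; \pi_0 B\mathbf{Tors}_{\Nis}(\op{G})(X) \;\cong\; \pi_0 \Singaone B\mathbf{Tors}_{\Nis}(\op{G})(X) \;\cong\; [X,B\op{G}]_{\aone},
\]
where the first equality is the definition, the middle isomorphism uses affine $\aone$-invariance of $\op{H}^1_{\Nis}$ furnished by Theorem~\ref{thm:affinehomotopyinvarianceforGtorsors}, and the last isomorphism is precisely the content of $\aone$-naivety together with the remark preceding the theorem statement (i.e., \cite[Remark 2.1.2]{AHWII}).

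No genuine technical obstacle remains at this stage: the substantive work lies entirely in Theorem~\ref{thm:affinehomotopyinvarianceforGtorsors}, which itself rests on the finite-field Gabber lemma. The present proof is thus a matter of assembling previously established pieces and invoking the representability machine already developed in \cite{AHW,AHWII}.
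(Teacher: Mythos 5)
Your proposal is correct and matches the paper's proof exactly: the paper simply combines \cite[Theorem 2.3.5]{AHWII} with Theorem~\ref{thm:affinehomotopyinvarianceforGtorsors}, which is precisely your argument. Your additional unwinding of the definitions is accurate but just makes explicit what is implicit in that one-line combination.
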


\begin{proof}
Combine \cite[Theorem 2.3.5]{AHWII} with Theorem~\ref{thm:affinehomotopyinvarianceforGtorsors}.
\end{proof}

Suppose $\op{H} \to \op{G}$ is a closed immersion of ``isotropic" reductive $k$-group schemes.  By appeal to \cite[Th\'eor\`eme 4.C]{Anantharaman}, the quotient $\op{G}/\op{H}$ exists as a smooth $k$-scheme.  Since the map $\op{G} \to \op{G}/\op{H}$ is an $\op{H}$-torsor, it follows that the quotient is smooth since $\op{G}$ has the same property.  That the quotient is affine follows from the fact that $\op{H}$ is reductive and may be realized as $\Spec \Gamma(\op{G},\O_{\op{G}})^{\op{H}}$ (\cite[Theorems 9.1.4 and 9.7.6]{Alper}; for later use, observe that these statements hold over an arbitrary base).  Since $\op{G}$ and $\op{H}$ are reductive, they are connected by assumption, and the connectness statement for the quotient follows.  Granted these fact, we establish Theorem~\ref{thmintro:homogeneousspaces}.

\begin{thm}
\label{thm:representabilityforhomogeneousspaces}
If $\op{H} \to \op{G}$ is a closed immersion of ``isotropic" reductive $k$-group schemes, and if the $\op{H}$-torsor $\op{G} \to \op{G}/\op{H}$ is Nisnevich locally split, then $\op{G}/\op{H}$ is $\aone$-naive.
\end{thm}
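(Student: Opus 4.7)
The plan is to adapt the argument over infinite fields from \cite{AHWII}, since the only input that needed strengthening in the finite-field case was the $\aone$-naivety of $B_{\Nis}\op{G}$ for ``isotropic'' $\op{G}$, which Theorem~\ref{thm:representabilityofNisG} now provides unconditionally.

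First, I would extract from the Nisnevich local splitting hypothesis a Nisnevich-local homotopy fiber sequence
\[
\op{G} \longrightarrow \op{G}/\op{H} \longrightarrow B_{\Nis}\op{H},
\]
where the rightmost map classifies the $\op{H}$-torsor $\op{G} \to \op{G}/\op{H}$. Composing with the map $B_{\Nis}\op{H} \to B_{\Nis}\op{G}$ induced by the inclusion $\op{H} \hookrightarrow \op{G}$, and observing that the resulting composite $\op{G} \to B_{\Nis}\op{G}$ is null-homotopic, I obtain a Nisnevich-local homotopy Cartesian square
\[
\xymatrix{
\op{G}/\op{H} \ar[r] \ar[d] & B_{\Nis}\op{H} \ar[d] \\
\ast \ar[r] & B_{\Nis}\op{G}.
}
\]

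By Theorem~\ref{thm:representabilityofNisG}, the two objects on the right are $\aone$-naive. By \cite[Proposition 2.1.3]{AHWII}, to conclude it is enough to show that $\Singaone(\op{G}/\op{H})$ satisfies affine Nisnevich excision. For a distinguished Nisnevich square of smooth affine $k$-schemes, I would apply $\Singaone$ to the Cartesian square above and invoke the excision property of $\Singaone B_{\Nis}\op{H}$ and $\Singaone B_{\Nis}\op{G}$ (which is the $\aone$-naivety provided by Theorem~\ref{thm:representabilityofNisG}); a diagram chase on the resulting square of homotopy pullbacks then transfers the excision property to $\op{G}/\op{H}$.

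The main obstacle is that $\Singaone$ does not in general preserve homotopy fiber sequences, so one must argue that the Nisnevich-local fiber sequence descends to a genuine homotopy fiber sequence of simplicial sets after evaluation on $A \otimes \O(\Delta^\bullet_k)$ for smooth affine $A$. This is the step already carried out in \cite{AHWII}: the $\aone$-naivety of the base $B_{\Nis}\op{H}$ and the classifying space $B_{\Nis}\op{G}$ ensures that the relevant Nisnevich-local fibrant replacements are affine-Nisnevich-excisive on the nose, reducing the verification to a routine long-exact-sequence argument applied level-wise along the cosimplicial affine scheme $\Delta^\bullet_k$.
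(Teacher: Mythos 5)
Your proposal is correct and takes essentially the same route as the paper, which simply invokes \cite[Theorem 2.4.2]{AHWII} together with the affine homotopy invariance (Theorem~\ref{thm:affinehomotopyinvarianceforGtorsors}); you are in effect sketching the proof of \cite[Theorem 2.4.2]{AHWII}, whose content is exactly the Cartesian square $\op{G}/\op{H} \to B_{\Nis}\op{H} \to B_{\Nis}\op{G}$ and the interaction of $\Singaone$ with it, and then feeding in the same new ingredient (naivety of $B_{\Nis}\op{G}$ and $B_{\Nis}\op{H}$ over arbitrary fields).
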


\begin{proof}
Combine \cite[Theorem 2.4.2]{AHWII} with Theorem~\ref{thm:affinehomotopyinvarianceforGtorsors}.
\end{proof}

The following result generalizes \cite[Theorem 4]{AHWII}.

\begin{thm}
\label{thm:generalizedflagsareaonenaive}
Assume $\op{G}$ is an ``isotropic" reductive $k$-group scheme and $\op{P} \subset \op{G}$ is a parabolic $k$-subgroup possessing an isotropic Levi factor (e.g., if $\op{G}$ is split), then $\op{G}/\op{P}$ is $\aone$-naive.
\end{thm}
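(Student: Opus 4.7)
The plan is to Levi-decompose $\op{P}=\op{L}\ltimes \op{U}$, where $\op{L}$ is the given isotropic Levi and $\op{U}$ is the unipotent radical, and then to reduce the claim about $\op{G}/\op{P}$ to an application of Theorem~\ref{thm:representabilityforhomogeneousspaces} for the pair $\op{L}\hookrightarrow \op{G}$ by passing through the intermediate quotient $\op{G}/\op{L}$. The key structural fact is that the natural projection $q\colon \op{G}/\op{L}\to \op{G}/\op{P}$ is a torsor under $\op{U}$, and $\op{U}$, being split unipotent (it admits a filtration with graded pieces isomorphic to $\ga$), is isomorphic to $\mathbb{A}^{\dim\op{U}}$ as a $k$-scheme.

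The geometric input I would use is the classical fact that the $\op{P}$-torsor $\op{G}\to \op{G}/\op{P}$ is Zariski-locally trivial (the big-cell Bruhat decomposition obtained from an opposite parabolic provides an explicit trivializing open, and its translates cover $\op{G}/\op{P}$). Combined with the scheme isomorphism $\op{P}\cong \op{L}\times \op{U}$ induced by the semidirect product decomposition, this shows that both the $\op{L}$-torsor $\op{G}\to \op{G}/\op{L}$ and the $\op{U}$-torsor $q$ are Zariski-locally trivial. In particular, $\op{G}\to \op{G}/\op{L}$ is Nisnevich locally split, so Theorem~\ref{thm:representabilityforhomogeneousspaces} applied to the isotropic reductive pair $\op{L}\hookrightarrow \op{G}$ yields that $\op{G}/\op{L}$ is $\aone$-naive.

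To transfer $\aone$-naivety across $q$, note that $q$ is a Zariski-locally trivial $\mathbb{A}^N$-bundle, hence an $\aone$-weak equivalence, so $\widetilde{\op{G}/\op{L}}\simeq \widetilde{\op{G}/\op{P}}$ as $\aone$-local fibrant replacements. It remains to show that $\Singaone(\op{G}/\op{L})(X)\to \Singaone(\op{G}/\op{P})(X)$ is a weak equivalence of simplicial sets for every smooth affine $X$. Fiberwise, for $s\in (\op{G}/\op{P})(X\times\Delta^n)$, the pulled-back $\op{U}$-torsor on the affine scheme $X\times \Delta^n$ is trivial (as $\op{U}$ is split unipotent), so the $n$-simplices of the fiber identify with $\op{U}(X\times\Delta^n)$; the resulting simplicial fiber is $\Singaone(\op{U})(X)$, which is contractible because $\op{U}\cong \mathbb{A}^N$ is $\aone$-contractible.

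The most delicate step is the last one: promoting this pointwise contractibility of fibers into a genuine weak equivalence of simplicial sets over each smooth affine $X$. I would handle it by verifying that $\Singaone q(X)$ is a Zariski-locally trivial bundle of simplicial sets with contractible fiber $\Singaone(\op{U})(X)$ and invoking the long exact sequence of such a fibration; alternatively, one can directly mimic the affine-bundle argument used in the split case of \cite[Theorem~4]{AHWII}, which is precisely what the present theorem is designed to generalize.
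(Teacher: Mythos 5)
Your proposal follows the same route as the paper: pass to the intermediate quotient $\op{G}/\op{L}$, apply Theorem~\ref{thm:representabilityforhomogeneousspaces} to the pair $\op{L}\hookrightarrow\op{G}$, and then transfer $\aone$-naivety along $\op{G}/\op{L}\to\op{G}/\op{P}$. Two remarks on the details. First, $\op{G}/\op{L}\to\op{G}/\op{P}$ is not a torsor under the \emph{constant} group scheme $\op{U}$: right translation by $\op{U}$ does not descend to $\op{G}/\op{L}$ because $\op{L}$ does not centralize $\op{U}$. It is a torsor under the twisted form $\op{G}\times^{\op{P}}\op{U}$ (with $\op{P}$ acting through conjugation via the Levi quotient); what one retains is that the filtration of $\op{U}$ by vector groups exhibits this map as a \emph{composition of torsors under vector bundles}, which is exactly the formulation the paper uses. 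Second, the final step you rightly flag as delicate is handled in the paper by citing \cite[Lemma 4.2.4]{AHWII}, which states directly that $\aone$-naivety is preserved under such compositions of torsors under vector bundles; your sketch via a ``Zariski-locally trivial bundle of simplicial sets with contractible fiber'' would need to be made precise (one should be careful that the twisted structure group still trivializes after pullback to $X\times\Delta^n$, which it does because of the vector-group filtration and the affineness of $X\times\Delta^n$), and your fallback of mimicking the proof of \cite[Theorem~4]{AHWII} amounts to re-deriving that lemma. So: same decomposition and same key reduction; the paper simply outsources the affine-bundle transfer to an already-packaged lemma rather than re-proving it.
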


\begin{proof}
Let $\op{L}$ be a Levi factor for $\op{P}$.  The quotients $\op{G}/\op{L}$ and $\op{G}/\op{P}$ exist; see, e.g., \cite[Lemma 3.1.5]{AHWII}.  Moreover, the map $\op{G}/\op{L} \to \op{G}/\op{P}$ induced by the inclusion is a composition of torsors under vector bundles.  Under the assumption that $\op{L}$ is ``isotropic", $\op{G}/\op{L}$ is $\aone$-naive by appeal to Theorem~\ref{thm:representabilityforhomogeneousspaces}.  The fact that $\op{G}/\op{P}$ is $\aone$-naive then follows by appeal to \cite[Lemma 4.2.4]{AHWII} using the fact that $\op{G}/\op{L} \to \op{G}/\op{P}$ is a composition of torsors under vector bundles.
\end{proof}

\subsection{Local triviality of homogeneous spaces}
In order to apply Theorem~\ref{thmintro:homogeneousspaces}, we need a criterion to establish that if $\op{H} \subset \op{G}$ is a group homomorphism, the quotient map $\op{G} \to \op{G}/\op{H}$ is Nisnevich locally trivial.  In this section, we develop some criteria to guarantee this condition holds.

\subsubsection*{Criteria for Nisnevich-local triviality}
\begin{lem}
\label{lem:reductiontorostinv}
Assume $R$ is a commutative unital ring of finite Krull dimension and suppose that $\op{H} \subset \op{G}$ is an inclusion of split reductive $R$-group schemes.
\begin{enumerate}[noitemsep,topsep=1pt]
\item The quotient $\op{G}/\op{H}$ exists as a (connected) smooth affine scheme.
\item The $\op{H}$-torsor $\op{G} \to \op{G}/\op{H}$ is Nisnevich-locally trivial if for any field $K$, the map $\op{H}^1_{\fppf}(K,\op{H}) \to \op{H}^1_{\fppf}(K,\op{G})$ has trivial kernel.
\end{enumerate}
If $R$ is a field, the same results hold without the splitness assumptions.
\end{lem}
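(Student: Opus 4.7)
For part (1), the plan is to assemble ingredients already discussed in the paragraph preceding Theorem~\ref{thm:representabilityforhomogeneousspaces}: in the split reductive setting, SGA3 \cite{SGA3-3} gives existence of the quotient $\op{G}/\op{H}$ as a smooth scheme over any base; smoothness of $\op{G}/\op{H}$ is then inherited from $\op{G}$ since $\op{G} \to \op{G}/\op{H}$ is an fppf $\op{H}$-torsor with smooth structure group; affineness comes from \cite[Theorems 9.1.4 and 9.7.6]{Alper}, which is valid over an arbitrary base; and connectedness of $\op{G}/\op{H}$ follows from that of $\op{G}$ via surjectivity of the quotient map. When $R$ is a field, existence without the splitness hypothesis is given by \cite[Th\'eor\`eme 4.C]{Anantharaman}, and the rest of the argument is unchanged.

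For part (2), we first reduce Nisnevich-local triviality of the torsor $\op{G} \to \op{G}/\op{H}$ to a fiberwise statement over residue fields. For any point $x \in \op{G}/\op{H}$, the pullback of the torsor to the henselization $\O^h_{\op{G}/\op{H},x}$ is an $\op{H}$-torsor whose total space is smooth over a henselian local ring. Hensel's lemma for smooth schemes then gives that this torsor is trivial if and only if its closed fiber is a trivial $\op{H}_{\kappa(x)}$-torsor over $\kappa(x)$. Hence Nisnevich-local triviality of $\op{G} \to \op{G}/\op{H}$ reduces to the statement that the scheme-theoretic fiber at every $x \in \op{G}/\op{H}$ is a trivial $\op{H}_{\kappa(x)}$-torsor.

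To conclude, we identify the class of this fiber in $\op{H}^1_{\fppf}(\kappa(x), \op{H}_{\kappa(x)})$ via the boundary map in the long exact sequence of nonabelian fppf cohomology attached to $\op{H} \hookrightarrow \op{G} \to \op{G}/\op{H}$. By exactness, its image in $\op{H}^1_{\fppf}(\kappa(x), \op{G}_{\kappa(x)})$ is trivial: the induced $\op{G}_{\kappa(x)}$-torsor is the pullback of the trivial $\op{G}_{\kappa(x)}$-torsor $\op{G}_{\kappa(x)} \to \op{G}_{\kappa(x)}/\op{H}_{\kappa(x)}$ along the $\kappa(x)$-point $x$. The standing hypothesis on the kernel then forces the class to be trivial, completing the argument; the field version is identical. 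The main subtlety lies in keeping track of the nonabelian cohomology sequence and verifying exactness at $\op{H}^1$; the remaining ingredients (Hensel's lemma for smooth schemes, Matsushima--Alper, SGA3) are standard off-the-shelf tools.
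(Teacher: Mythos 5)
Your proof of part~(2) is correct but takes a genuinely different and more elementary route than the paper. The paper first reduces to the case $R = \Z$ (split reductive group schemes descend to $\Z$, and the relevant properties are preserved under base change), and then invokes a Grothendieck--Serre type theorem --- \cite[Proposition 2]{BB} over $\Z$, respectively \cite[Theorem 4.5]{Nisnevich} over a field --- to reduce Nisnevich-local triviality of the torsor to triviality at the generic point alone. You instead argue at \emph{every} point $x\in\op{G}/\op{H}$: since $\op{H}$ is smooth, the torsor $\op{G}\times_{\op{G}/\op{H}}\Spec\O^h_{\op{G}/\op{H},x}$ is smooth over a henselian local ring, so by Hensel's lemma its triviality is equivalent to triviality of the closed fiber, and that fiber is an $\op{H}_{\kappa(x)}$-torsor lying in the kernel of $\op{H}^1_{\fppf}(\kappa(x),\op{H})\to\op{H}^1_{\fppf}(\kappa(x),\op{G})$, hence trivial by hypothesis. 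This bypasses the Grothendieck--Serre input entirely; since the hypothesis quantifies over all fields $K$, applying it at every residue field rather than only at the function field costs nothing. (One should still note, as is implicit in both approaches, that triviality over each henselization yields triviality over a Nisnevich neighborhood by spreading out, using that $\op{H}$ is finitely presented; and that $\op{G}/\op{H}$ is qcqs.)

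For part~(1), you assemble the same ingredients as the paper (Alper for affineness and the realization as $\Spec\Gamma(\op{G},\O_{\op{G}})^{\op{H}}$, smoothness via the torsor structure, connectedness from that of $\op{G}$), but working directly over $R$ rather than first reducing to $\Z$. This is defensible since the paper itself remarks that the cited results of Alper hold over an arbitrary base; however, your appeal to SGA3 for existence of the quotient as a scheme over any base is looser than the paper's reduction to $\Z$ followed by Anantharaman (who works over Dedekind bases). The Alper reference already does the representability work, so this is only a citation quibble.
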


\begin{proof}
We first treat the case with splitness assumptions in place.  In that case, split reductive group schemes are pulled back from $\mathbb{Z}$-group schemes.  For both claims, it suffices to prove the result with $R = \Z$: formation of quotients commutes with base-change, affineness and Nisnevich local triviality will be preserved by base-change as well.  Assuming $R = \Z$, the existence of the quotient and the relevant properties are established before the statement of Theorem~\ref{thm:representabilityforhomogeneousspaces}

Now we establish the second statement. To show the relevant torsor is Nisnevich locally trivial, it suffices, by appeal to \cite[Proposition 2]{BB}, to show that the $\op{H}$-torsor in question is rationally trivial, i.e., trivial over the generic point of $\op{G}/\op{H}$ (which is an integral affine $\Z$-scheme).  To that end, the generic point is the spectrum of the fraction field $K$ of the ring $\Gamma(\op{G}/\op{H},\O_{\op{G}/\op{H}})$ and it suffices to show that the restriction of $\op{G} \to \op{G}/\op{H}$ admits a section upon restriction to $K$.  However, the pullback of $\op{G}\to \op{G}/\op{H}$ along the map $\Spec K \to \op{G}/\op{H}$ is an $\op{H}$-torsor on $\Spec K$ whose associated $\op{G}$-torsor is trivial.  The condition that the map $\op{H}^1_{\fppf}(K,\op{H}) \to \op{H}^1_{\fppf}(K,\op{G})$ has trivial kernel precisely guarantees that this  $\op{H}$-torsor over $\Spec K$ is trivial, i.e., admits a section.

When $R$ is a field, one proceeds in an analogous fashion: the existence of and properties of the quotient follow exactly as above. To establish Nisnevich local triviality, one replaces the reference to \cite[Proposition 2]{BB} above with a reference to \cite[Theorem 4.5]{Nisnevich} (note Nisnevich's result is stated for semi-simple groups, but the argument works for reductive group schemes; this is mentioned, e.g., in \cite[\S 1.1]{FedorovPanin}).
\end{proof}

\subsubsection*{The Rost invariant and Nisnevich-local triviality}
Assume $\op{G}$ is a simple, simply-connected algebraic group over a field $F$. The Rost invariant of $\op{G}$ is a natural transformation of functors on the category of field extensions of $F$:
\[
\op{H}^1_{\et}(-,\op{G}) \stackrel{r_{\op{G}}}{\longrightarrow} \op{H}^3_{\et}(-,\Q/\Z(2));
\]
see \cite[Appendix A]{GMS} for more details regarding the group on the right (it will not be important here).  What is important is that the Rost invariant is functorial for homomorphisms of simply connected groups \cite[Proposition 9.4]{GMS}.  In other words, if $\varphi: \op{G}_1 \to \op{G}_2$ is a homomorphism of simply-connected reductive algebraic groups, then there is a commutative diagram of the form
\begin{equation}
\label{eqn:functorialityofRost}
\xymatrix{
\op{H}^1(F,\op{G}_1) \ar[r]^-{r_{\op{G}_1}}\ar[d] & \op{H}^3(F,\Q/\Z(2)) \ar[d]^{n_\varphi} \\
\op{H}^1(F,\op{G}_2) \ar[r]_-{r_{\op{G}_2}} & \op{H}^3(F,\Q/\Z(2))
}
\end{equation}
where $n_\varphi$ is an integer called the {\em Dynkin index} of the homomorphism $\varphi$ or the {\em Rost multiplier} of $\varphi$.

If $\op{G}$ is semi-simple and simply-connected, then an $n$-dimensional $k$-rational representation $\rho$ of $\op{G}$ yields an embedding $\rho: \op{G} \to \op{SL}_n$; we refer to the Dynkin index of this homomorphism as the Dynkin index of the representation.  The Dynkin index then has the following properties:
\begin{enumerate}[noitemsep,topsep=1pt]
\item it is a non-negative integer that is $0$ if and only if the homomorphism is trivial;
\item the Rost multiplier of a composite is the product of the Rost multipliers (\cite[Proposition 7.9]{GMS});
\item if $\rho_1$ and $\rho_2$ are two representations of $\op{G}$, then $n_{\rho_1 \oplus \rho_2} = n_{\rho_1} + n_{\rho_2}$;
\item the Dynkin index of the adjoint representation is the dual Coxeter number.
\end{enumerate}
One then deduces the following criterion for detecting Nisnevich local triviality.

\begin{lem}
\label{lem:rostinvariantcriterionfortriviality}
Assume $\varphi: \op{H} \subset \op{G}$ is a closed immersion group homomorphism of simply-connected semi-simple $k$-group schemes.  If (i) the Dynkin index for $\varphi$ is $1$, and (ii) for every extension $K/k$, the Rost invariant for $\op{H}_K$ is trivial, then the torsor $\op{G} \to \op{G}/\op{H}$ is Nisnevich locally trivial.
\end{lem}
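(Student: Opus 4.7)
The plan is to reduce the statement to a computation with the Rost invariant via Lemma~\ref{lem:reductiontorostinv}. Since $k$ is a field, the ``without splitness assumptions'' version of that lemma applies: to prove that $\op{G} \to \op{G}/\op{H}$ is Nisnevich-locally trivial, it is enough to show that for every field extension $K/k$, the map
\[
\op{H}^1_{\fppf}(K,\op{H}) \longrightarrow \op{H}^1_{\fppf}(K,\op{G})
\]
has trivial kernel. Because $\op{H}$ and $\op{G}$ are smooth, both flat cohomology sets coincide with their \'etale counterparts, so it suffices to analyze the kernel in \'etale cohomology.

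The main input is then the functoriality of the Rost invariant. Fix an extension $K/k$ and suppose $\alpha \in \op{H}^1_{\et}(K,\op{H})$ maps to the trivial class in $\op{H}^1_{\et}(K,\op{G})$. Applying diagram~\eqref{eqn:functorialityofRost} to $\varphi_K \colon \op{H}_K \hookrightarrow \op{G}_K$ yields
\[
n_{\varphi_K} \cdot r_{\op{H}_K}(\alpha) \;=\; r_{\op{G}_K}(\varphi_*\alpha) \;=\; r_{\op{G}_K}(0) \;=\; 0
\]
in $\op{H}^3(K,\Q/\Z(2))$. Since the Dynkin index is stable under base change on the base field, $n_{\varphi_K} = n_\varphi = 1$ by hypothesis~(i), so $r_{\op{H}_K}(\alpha) = 0$. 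Hypothesis~(ii), interpreted as saying that the Rost invariant of $\op{H}_K$ has trivial kernel, then forces $\alpha$ to be the trivial class, completing the argument.

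Two small points warrant attention. First, one needs to verify that Lemma~\ref{lem:reductiontorostinv} is applicable in the present setting; as $k$ is a field and $\op{H},\op{G}$ are simply-connected semisimple, the quotient $\op{G}/\op{H}$ exists as a smooth (geometrically integral) $k$-scheme, and the ``field case'' clause of Lemma~\ref{lem:reductiontorostinv} removes the splitness hypothesis. Second, one should confirm that invoking functoriality of the Rost invariant over the arbitrary extension $K$ is legitimate; this is standard, since the Rost invariant is a natural transformation of functors on field extensions, and its compatibility with morphisms of simply-connected groups (together with the associated Rost multiplier) is preserved under base change. The only real step is therefore the diagram chase itself, and the essential obstruction would be the unavailability of functoriality or the appearance of a nontrivial Rost multiplier, both of which are ruled out by the hypotheses.
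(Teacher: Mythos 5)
Your proof is correct and follows essentially the same route as the paper's: reduce via Lemma~\ref{lem:reductiontorostinv} to showing the kernel of $\op{H}^1(K,\op{H}) \to \op{H}^1(K,\op{G})$ is trivial over every field extension $K/k$, then chase the functoriality diagram~\eqref{eqn:functorialityofRost} using that the Rost multiplier is $1$ and that the Rost invariant of $\op{H}_K$ has trivial kernel. The extra remarks about fppf versus \'etale cohomology for smooth groups and the base-change stability of the Dynkin index are welcome clarifications, but they do not change the argument; note also that your reading of hypothesis~(ii) as ``the Rost invariant of $\op{H}_K$ has trivial kernel'' is the intended one, matching how the paper actually uses it in the proof.
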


\begin{proof}
By Lemma~\ref{lem:reductiontorostinv}, it suffices to prove: for every extension $K/k$, if given an $\op{H}$-torsor $P$ over $\Spec K$ such that the associated $\op{G}$-torsor $P'$ over $\Spec K$ (obtained by extending the structure group via $\varphi$) is trivial, then $P$ is already trivial.

Assume the kernel of the Rost invariant for $\op{H}$ is trivial for every extension $K/k$.  Suppose $P$ is an $\op{H}_K$-torsor over $\Spec K$, and the associated associated $\op{G}_K$-torsor $P'$ over $\Spec K$ is trivial.  Since the Rost invariant of $P'$ is necessarily trivial, the assumption that $\varphi$ has Rost multiplier $1$ implies $P$ has trivial Rost invariant.  However, since the Rost invariant for $\op{H}_K$ was assumed to be injective, we conclude that $P$ is trivial, which is precisely what we wanted to show.
\end{proof}

For quasi-split groups of low rank, the Rost invariant is frequently injective \cite{Garibaldi}.  Indeed, Garibaldi shows \cite[Theorems 0.1 and 0.5]{Garibaldi} that the Rost invariant is trivial in the following cases:
\begin{enumerate}[noitemsep,topsep=1pt]
\item quasi-split groups of absolute rank $\leq 5$;
\item quasi-split groups of type $\op{B}_6, \op{D}_6$ or $\op{E}_6$;
\item quasi-split groups of type $\op{E}_7$ or split groups of type $\op{D}_7$.
\end{enumerate}
Thus we obtain a number of Nisnevich local triviality results by computation of Dynkin indices.

\begin{ex}
\label{ex:fibersequences}
The Rost multiplier of the inclusion of split groups $\op{Spin}_9 \hookrightarrow \op{F}_4$ is $1$, so Lemma~\ref{lem:rostinvariantcriterionfortriviality} combined with \cite[Theorems 0.1 and 0.5]{Garibaldi} imply that the $\op{Spin}_9$-torsor $\op{F}_4 \to \op{F}_4/\op{Spin}_9$ is Nisnevich locally trivial.  Similar results hold for $\op{F}_4 \subset \op{E}_6$ and $\op{E}_6 \subset \op{E}_7$ (see \cite{Garibaldi} for more details).  Thus, in each of these case, Theorem~\ref{thm:representabilityforhomogeneousspaces} applies and guarantees that the relevant homogeneous space is $\aone$-naive.
\end{ex}

\begin{rem}
Following \cite{AHWOctonion}, one can use the $\aone$-fiber sequences associated with inclusions appearing in Example~\ref{ex:fibersequences} to deduce results about reduction of the corresponding structure groups for (Nisnevich locally trivial) torsors over smooth affine schemes.  Moreover, torsors under the various group schemes above are related to classical algebraic invariants (e.g., $\op{F}_4$-torsors correspond to Albert algebras, $\op{E}_6$- and $\op{E}_7$-torsors correspond to certain structurable algebras \cite{GaribaldiStructurable}).  In light of these applications, we pose the following question, which would be especially interesting to analyze in the cases mentioned in Example~\ref{ex:fibersequences}.
\end{rem}

\begin{question}
\label{question:homogspaceconnectivity}
Suppose $\op{H} \to \op{G}$ is a closed immersion of ``isotropic" reductive $k$-group schemes such that $\op{G} \to \op{G}/\op{H}$ is Nisnevich locally trivial.  
\begin{itemize}[noitemsep,topsep=1pt]
\item What is the $\aone$-connectivity of $\op{G}/\op{H}$?  
\item What is the structure of the first non-vanishing $\aone$-homotopy sheaf of $\op{G}/\op{H}$?
\end{itemize}
\end{question}

\subsubsection*{Motivic spheres as homogeneous spaces}
In \cite{BorelSpheres}, Borel completed the classification of homogeneous spaces that are spheres.  We now establish a similar result for motivic spheres.  To this end, we write $\op{Q}_{2n-1}$ for the split smooth affine quadric defined by the equation $\sum_{i=1}^n x_iy_i = 1$, and $\op{Q}_{2n}$ for the split smooth affine quadric defined by the equation $\sum_{i=1}^n x_iy_i = z(1-z)$.  In \cite[Theorem 2]{ADF}, we showed that $\op{Q}_{2n}$ is $\aone$-weakly equivalent to $S^n \sma \gm{\sma n}$, and it is well known that $\op{Q}_{2n-1}$ is $\aone$-weakly equivalent to $S^{n-1} \sma \gm{\sma n}$.

\begin{thm}
\label{thm:oddspheres}
Suppose $R$ is a commutative base ring.  The following homogeneous spaces are isomorphic to odd-dimensional motivic spheres:
\begin{enumerate}[noitemsep,topsep=1pt]
\item the quotients $\op{SL}_n/\op{SL}_{n-1}$, $\op{SO}_{2n}/\op{SO}_{2n-1}$, and $\op{Sp}_{2m}/\op{Sp}_{2m-1}$ (with $n = 2m$) are isomorphic to $Q_{2n-1}$;
\item the quotient $\op{Spin}_{7}/\op{G}_2$ is isomorphic to $\op{Q}_7$; and
\item the quotient $\op{Spin}_9/\op{Spin}_7$ is isomorphic to  $\op{Q}_{15}$.
\end{enumerate}
Furthermore, for each pair $(\op{G},\op{H})$ as above, the torsor $\op{G} \to \op{G}/\op{H}$ is Zariski locally trivial.
\end{thm}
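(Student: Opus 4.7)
The plan is, for each pair $(\op{G},\op{H})$, to construct an explicit $\op{G}$-equivariant morphism $\op{G} \to \op{Q}_{2n-1}$ over $\mathbb{Z}$ exhibiting $\op{Q}_{2n-1}$ as the orbit of a chosen base point whose stabilizer is $\op{H}$, then verify the induced map $\op{G}/\op{H} \to \op{Q}_{2n-1}$ is an isomorphism while simultaneously producing Zariski-local sections. Since all the groups, quadrics, and base points involved admit natural integral models, the result over an arbitrary commutative base ring $R$ follows by base change.

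For the classical cases (1), the relevant actions are standard: $\op{SL}_n$ acts on the subscheme of pairs $(x,y) \in \mathbb{A}^n \times \mathbb{A}^n$ with $y x = 1$ via $g \cdot (x,y) = (gx, y g^{-1})$, and the stabilizer of $(e_1, e_1^t)$ is the standard block $\op{SL}_{n-1}$; $\op{SO}_{2n}$ acts on the norm-one hypersurface of the split form $\sum x_iy_i$, with stabilizer of $e_1 + f_1$ equal to $\op{SO}_{2n-1}$; and $\op{Sp}_{2m}$ acts on pairs $(v,w) \in \mathbb{A}^{2m} \times \mathbb{A}^{2m}$ with $\omega(v,w) = 1$ (a smooth affine quadric isomorphic to $\op{Q}_{4m-1}$), with stabilizer of the standard symplectic pair $(e_1,f_1)$ equal to the subgroup acting on the symplectic complement. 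In each case, Zariski-local sections of the orbit map are built by elementary matrix manipulations on a standard affine open cover of $\op{Q}_{2n-1}$: for instance, on $\{x_i \text{ is a unit}\}$ in the $\op{SL}_n$ case one completes $x$ to an invertible matrix whose inverse has first row $y$ using elementary operations. These local sections simultaneously prove that the orbit map is an $\op{H}$-torsor, yielding $\op{G}/\op{H} \cong \op{Q}_{2n-1}$ together with Zariski-local triviality.

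For cases (2) and (3), the plan is to use octonions and spin representations. Let $\mathbb{O}$ denote the split Cayley algebra over $\mathbb{Z}$ equipped with its rank-$8$ norm form. The $8$-dimensional spin representation of $\op{Spin}_7$ is canonically identified with $\mathbb{O}$ with its norm form, and the stabilizer of the unit $1 \in \mathbb{O}$ is $\op{G}_2 = \op{Aut}(\mathbb{O})$, realizing $\op{Q}_7$ as the orbit of $1$ and giving $\op{Spin}_7/\op{G}_2 \cong \op{Q}_7$. Similarly, the $16$-dimensional spin representation of $\op{Spin}_9$ carries a $\op{Spin}_9$-invariant quadratic form, and $\op{Spin}_7$ arises via triality as the stabilizer of a suitable unit spinor, yielding $\op{Spin}_9/\op{Spin}_7 \cong \op{Q}_{15}$. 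That the orbit maps induce isomorphisms integrally follows from smoothness of both sides together with the pointwise stabilizer computation. For Zariski-local triviality: in the $\op{Spin}_7/\op{G}_2$ case, given $x \in \mathbb{O}$ of unit norm, left-multiplication by $x$ (after normalization in the Clifford algebra) provides an explicit element of $\op{Spin}_7$ carrying $1$ to $x$, and the $\op{Spin}_9/\op{Spin}_7$ case is handled analogously via triality. The main obstacle is arranging these constructions integrally rather than over an algebraically closed field: this requires the Chevalley $\mathbb{Z}$-model of the Clifford algebras, the Cayley--Dickson construction of $\mathbb{O}_\mathbb{Z}$, and the integral form of the triality automorphism of $\op{Spin}_8$; once these are in place, the classical arguments identifying the orbits, stabilizers, and local sections transpose directly.
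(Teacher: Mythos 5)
Your overall strategy (realize each $\op{Q}_{2n-1}$ as an orbit of $\op{G}$ acting on an explicit affine quadric over $\Z$, identify the stabilizer with $\op{H}$, and produce local sections of the orbit map) is the same one underlying the paper, but the paper only gives an argument for $\op{Spin}_9/\op{Spin}_7 \cong \op{Q}_{15}$; the cases in (1) and (2) are quoted from \cite[\S 4.2]{AHWII} and \cite[Theorem 2.3.5]{AHWOctonion} respectively, which is where the arguments you sketch for those cases appear. The genuine divergence is in how Zariski-local triviality is established for the exceptional cases.

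For Zariski-local triviality you propose to write down an explicit element of $\op{Spin}_7$ (resp. $\op{Spin}_9$) carrying the base point to a given point of the orbit, via left multiplication in the octonions and, in case (3), ``analogously via triality.'' This is the weak point of your proposal. It is not automatic that left multiplication $L_x$ by a unit-norm octonion lies in the image of $\op{Spin}_7 \to \op{O}_8$: a priori $L_x$ is merely an isometry of the norm form, and identifying it with an element of the specific subgroup $\op{Spin}_7$ (the one acting on $\mathbb{O}$ through the spin representation) requires a careful Clifford-algebra computation. Worse, for the $\op{Spin}_9/\op{Spin}_7$ case the phrase ``handled analogously via triality'' does not suggest what the requisite explicit element of $\op{Spin}_9 \subset \op{Cl}(\mathbb{O}\oplus\mathbb{O})^\times$ should be; this is the one case the paper actually has to prove, and an explicit section construction here is substantially harder than in the classical cases.

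The paper sidesteps this entirely with a cohomological argument. By the criterion of Lemma~\ref{lem:reductiontorostinv} (specialized to local rings), Zariski-local triviality of $\op{Spin}_9 \to \op{Spin}_9/\op{Spin}_7$ reduces to showing that for a local ring $R$, the map $\op{H}^1(R,\op{Spin}_7)\to\op{H}^1(R,\op{Spin}_9)$ has trivial kernel; interpreting $\op{Spin}_n$-torsors in terms of quadratic spaces with trivialized discriminant and Clifford invariant, this is exactly Witt cancellation (\cite[Theorem 8.4]{EKM}). This avoids any explicit Clifford-algebra element-chasing and immediately works over arbitrary local rings. Likewise, the identification of the quotient is carried out by reducing to geometric points (using smoothness of both sides, as you also suggest) and then quoting the classical transitivity and stabilizer calculations (\cite[C.4]{Conrad}), rather than attempting an integral orbit computation from scratch. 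If you want to salvage your explicit-section approach for (2) and (3), you would need to actually exhibit the relevant Clifford algebra elements and verify they lie in the spin group over $\Z$ — a nontrivial computation that your sketch does not carry out — so the paper's Witt-cancellation route is both shorter and more robust.
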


\begin{proof}
All of these results are presumably well-known.  The first three appear in \cite[\S 4.2]{AHWII} while the last one appears in \cite[Theorem 2.3.5]{AHWOctonion}.  It remains to identify $\op{Spin}_9/\op{Spin}_7 \cong \op{Q}_{15}$; this is essentially classical, so we provide an outline.

We use the notation of \cite[\S 2]{AHWOctonion}.  Let $O$ be the split octonion algebra over $\Z$, and consider the closed subscheme in the scheme $O \times O$ defined by $\op{N}_O(x) - \op{N}_O(y) = 1$ (see \cite[Definition 2.1.9]{AHWOctonion} for an explicit formula for the norm); this scheme is isomorphic to $\op{Q}_{15}$ by definition.  The space $O \times O$ carries the split quadratic form of rank $16$.  However, there is an induced action of $\op{Spin}_9$ on $\op{Q}_{15}$ coming from the spinor representation.

We now repeat the arguments at the beginning of the proof of \cite[Theorem 2.3.5]{AHWOctonion}.  We may first assume without loss of generality that $R = \Z$ and the result in general follows by base-change.  In that case, the relevant quotient exists by appeal to \cite[Th\'eor\`eme 4.C]{Anantharaman}.

The action of $\op{Spin}_9$ on $\op{Q}_{15}$ described above gives a morphism $\op{Spin}_9 \to \op{Q}_{15}$ by choice of a point.  It remains to show that this map induces an isomorphism of quotients.  As in the proof of \cite[Theorem 2.3.5]{AHWOctonion} we may reduce to the case of geometric points.  Having reduced to geometric points, transitivity may be established and the stabilizer identified by a straightforward (and classical) computation using Clifford algebras (see \cite[C.4]{Conrad} for a discussion of the relevant groups).

For Zariski local triviality, it suffices to show that if given a local ring $R$ and $\mathscr{P}$ a $\op{Spin}_7$-torsor over $R$, triviality of the associated $\op{Spin}_9$-torsor implies triviality of $\mathscr{P}$.  Equivalently, if the quadratic space associated with the $\op{Spin}_9$-torsor is split, then the initial quadratic space must also be split; this follows from Witt's cancellation theorem \cite[Theorem 8.4]{EKM}.
\end{proof}

\begin{rem}
Following \cite[Th{\'e}or{\`e}me 3]{BorelSpheres}, it seems reasonable to expect that the list above should be a complete list of homogeneous spaces that are isomorphic to odd-dimensional motivic spheres, at least over an algebraically closed field. 
\end{rem}

\begin{thm}
\label{thm:evenspheres}
If $k$ is a field having characteristic unequal to $2$, then $\op{Q}_{2n}$ is $\aone$-naive.
\end{thm}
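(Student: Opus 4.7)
The plan is to exhibit $\op{Q}_{2n}$ as a homogeneous space $\op{SO}_{2n+1}/\op{SO}_{2n}$ for split orthogonal groups and then invoke Theorem~\ref{thm:representabilityforhomogeneousspaces}. In characteristic $\neq 2$, the coordinate change $z = w + \tfrac{1}{2}$ followed by rescaling transforms the defining equation $\sum x_i y_i = z(1-z)$ into $\sum u_i v_i + w^2 = 1$, so $\op{Q}_{2n}$ is the ``unit sphere'' of the split quadratic form of rank $2n+1$. The split group $\op{SO}_{2n+1}$ acts on this sphere; the stabilizer of the base point $(0,\dots,0,1)$ is the subgroup fixing the last coordinate and acting on the orthogonal complement, which carries the hyperbolic form of rank $2n$, giving $\op{SO}_{2n}$. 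A standard computation on geometric points (transitivity via Witt's extension theorem, together with identification of the stabilizer) shows that the orbit map induces an isomorphism $\op{SO}_{2n+1}/\op{SO}_{2n} \isomt \op{Q}_{2n}$.

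Both $\op{SO}_{2n+1}$ and $\op{SO}_{2n}$ are split reductive $k$-group schemes, hence isotropic in the sense of Definition~\ref{defnintro:isotropic} (for $n=1$ the derived group of $\op{SO}_2 \cong \gm{}$ is trivial and the isotropy condition is vacuous). To apply Theorem~\ref{thm:representabilityforhomogeneousspaces} it remains to verify that the $\op{SO}_{2n}$-torsor $\op{SO}_{2n+1}\to\op{Q}_{2n}$ is Nisnevich locally trivial. By Lemma~\ref{lem:reductiontorostinv}(2), this reduces to showing that for every field extension $K/k$ the map
\[
\op{H}^1_{\fppf}(K,\op{SO}_{2n}) \longrightarrow \op{H}^1_{\fppf}(K,\op{SO}_{2n+1})
\]
has trivial kernel. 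Interpreting $\op{H}^1_{\fppf}(K,\op{SO}_m)$ as isomorphism classes of non-degenerate rank-$m$ quadratic forms over $K$ with trivial discriminant, this map sends $q$ to $q \perp \<1\>$, and triviality of the kernel is precisely Witt's cancellation theorem, valid in characteristic $\neq 2$ (this is essentially the same application used at the end of the proof of Theorem~\ref{thm:oddspheres}).

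Combining these two steps with Theorem~\ref{thm:representabilityforhomogeneousspaces} yields that $\op{Q}_{2n}$ is $\aone$-naive. There is no single hard step here: the argument is a bookkeeping exercise combining the paper's general representability theorem with classical quadratic form theory. The only mildly delicate points are the initial coordinate change (which is where the hypothesis $\op{char}(k)\neq 2$ enters in an essential way) and the correct identification of $\op{H}^1_{\fppf}(K,\op{SO}_m)$ with discriminant-preserving forms so that Witt cancellation applies directly to the kernel of interest.
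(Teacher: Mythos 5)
Your proof is correct and follows essentially the same route as the paper: identify $\op{Q}_{2n}$ with the homogeneous space $\op{SO}_{2n+1}/\op{SO}_{2n}$, observe the split groups are isotropic, check local triviality of the torsor, and invoke Theorem~\ref{thm:representabilityforhomogeneousspaces}. The only difference is one of economy: the paper simply cites \cite[Lemma 3.1.7]{AHWII}, which already supplies both the identification $\op{Q}_{2n}\cong\op{SO}_{2n+1}/\op{SO}_{2n}$ and the (stronger) fact that $\op{SO}_{2n+1}\to\op{Q}_{2n}$ is Zariski locally trivial, whereas you rederive the identification by a coordinate change and establish Nisnevich local triviality from scratch via Lemma~\ref{lem:reductiontorostinv}(2) together with the interpretation of $\op{H}^1_{\fppf}(K,\op{SO}_m)$ as discriminant-trivial forms and Witt cancellation. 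Both are valid; yours is more self-contained, the paper's is shorter and yields the stronger Zariski statement.
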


\begin{proof}
By \cite[Lemma 3.1.7]{AHWII}, we know that under the hypotheses $\op{Q}_{2n} \cong \op{SO}_{2n+1}/\op{SO}_{2n}$ and that the torsor $\op{SO}_{2n+1} \to \op{SO}_{2n+1}/\op{SO}_{2n}$ is Zariski locally trivial.  Since $\op{SO}_{m}$ is split, the result follows by appeal to Theorem~\ref{thm:representabilityforhomogeneousspaces}.
\end{proof}


\begin{footnotesize}
\bibliographystyle{alpha}
\bibliography{bundlesredux}
\end{footnotesize}
\Addresses
\end{document}